\documentclass[a4paper,10pt]{article}
\usepackage{amsfonts}
\usepackage{amssymb,amsthm,color,tikz}
\usepackage{amsmath}
\usepackage{comment}
\usepackage{dsfont}

\usepackage[normalem]{ulem}
\usepackage{url}

\newcommand{\R}{\mathbb{R}}
\newcommand{\N}{\mathbb{N}}

\newcommand{\Z}{\mathbb{Z}}

\newcommand{\Pa}{\mathbb{P}}

\newtheorem{theorem}{Theorem}
\newtheorem{lemma}[theorem]{Lemma}
\newtheorem{proposition}[theorem]{Proposition}

\newcommand{\Om}{\Omega}
\newcommand{\lb}{\lambda}

\newcommand{\dd}{\mathrm{d}}

\newcommand{\er}{\mathrm{e}}

\newcommand{\bp}{\begin{proof}}
\newcommand{\ep}{\end{proof}}

\begin{document}
\title{On the gradient of the torsion function for elongated cylinders}
\author{{M. van den Berg} \\
School of Mathematics, University of Bristol\\
Fry Building, Woodland Road\\
Bristol BS8 1UG, United Kingdom\\
\texttt{mamvdb@bristol.ac.uk}}
\date{30 June 2026}\maketitle
\vskip 1.5truecm \indent

\begin{abstract}\noindent
Let $(0,L)\times E_a\subset \R^{d+1}$ denote the cylinder of length $L$, and base $E_a\subset \R^{d},$ where $E_a$ is an open ellipsoid with semi-axes $a=(a_1,...,a_d)$. Let $w_{(0,L)\times E_a}$ denote its torsion function. Heat equation tools are used to show that (i) if $E_a$ is has small eccentricity and $L$ is sufficiently large, then the maxima of $|\nabla w_{(0,L)\times E_a}|$ are located at the centres $(0,0)$ and $(L,0)$ respectively, (ii) if $E_a$ has large eccentricity and $L$ is sufficiently large, then the maxima are located on the lateral side of the cylinder.
\end{abstract}
\vskip 1truecm \noindent   {Mathematics Subject
Classification ({2020})}: { 28A75, 35B50, 35K05, 60J65}.\\ \textbf{Keywords}: Torsion function, gradient, convexity, cylinder.

\section{Introduction and main results \label{sec1}}

In this paper we obtain results for the torsion function for non-empty open sets $\Om\subset \R^d$ with boundary $\partial\Om$. Consider the elliptic partial differential equation
\begin{equation}\label{e7}
-\Delta w=1,\,\,w\big|_{\partial\Om}=0.
\end{equation}
Let $\lb(\Om)$ denote the bottom of the spectrum of the Dirichlet Laplacian acting in $L^2(\Om)$. That is
\begin{equation*}
\lb(\Om)= \inf\Big\{\int_\Om |\nabla u|^2: u\in C_c^{\infty},\,\int_\Om u^2=1\Big\}.
\end{equation*}
It is well known that if $\lb(\Om)>0$ then \eqref{e7} has a unique solution, the torsion function, denoted by $w_{\Om}$. The torsion function is non-negative, bounded, and satisfies
\begin{equation*}
1\le \|w_{\Om}\|_{\infty}\lambda(\Om)\le c_d,
\end{equation*}
where $c_d$ depends on $d$ only. See for example \cite[Theorem 1]{vdBC}, and \cite[Theorem 1.5]{HV}.
The Dirichlet boundary condition $w_{\Om}\big|_{\partial\Om}=0$ in \eqref{e7} holds at all regular points of $\partial\Om$ \cite[Section 2]{BR}.

\smallskip

The literature on the $L^{\infty}$ norm of $|\nabla w_{\Om}|$ is extensive (see e.g. \cite{B, P, HS, SL,Stb} and the references therein). Since $|\nabla w_{\Om}|^2$ is subharmonic and $w_{\Om}|_{\partial\Om}=0$,  $|\nabla w_{\Om}|$ does not have a maximum in $\Om$ (see e.g. \cite{GP}, \cite[p.85]{RS}). If $\partial\Om$ is smooth or piecewise smooth then $|\nabla w_{\Om}|$ has a maximum on $\partial \Om$.
The location of the maxima of $|\nabla w_{\Om}|$ has been of great interest in particular for $\Om$ planar and convex \cite{BK,LY,LX}.
The points at which $|\nabla w_{\Om}|$ is maximal are of physical significance: $\nabla w_{\Om}$ contains the non-vanishing stress components of an elastic (infinitely long) cylindrical bar with cross-section $\Om$. These points of maximal stress are also known as fail points  \cite[p.195]{BK}. However, even in this simple setting of $\Om\subset \R^2$ there is no complete picture  of the location of these fail points \cite{LY,LX}.
Before we state our main result, Theorem \ref{the1} below, we recall two classical results, and introduce some further notation.

\smallskip

It was shown in \cite[(6.12)]{RS} that for $\Om\subset\R^d$ open and convex,
\begin{equation}\label{e10}
\|\nabla w_{\Om}\|^2_{\infty}\le 2\|w_{\Om}\|_{\infty}.
\end{equation}
Let $h>0$, and let
\begin{equation*}
\Om_h=\Big\{(x_1,x')\in \R^d: -\frac{h}{2}<x_1<\frac{h}{2},\,x'\in \R^{d-1}\Big\}.
\end{equation*}
The torsion function for $\Om_h$ is given by
\begin{equation*}
w_{\Om_h}(x)=\frac12\Big(\frac{h^2}{4}-x_1^2\Big),\,x\in \Om_h.
\end{equation*}
This gives equality in \eqref{e10} for $\Om=\Om_h.$

\smallskip

It was shown in \cite[(3),(4)]{WH} that if $w_{\Om}$ is the torsion function for an open set $\Om\subset \R^d$, then
\begin{equation}\label{e13}
\Delta\big(|\nabla w_{\Om}|^2+\frac2d w_{\Om}\big)\ge 0.
\end{equation}
This improves the results mentioned above on subharmonicity of $|\nabla w_{\Om}|^2$. Furthermore if $\partial\Om$ is regular and if $w_{\Om}\in L^{\infty}(\Om)$, then \eqref{e13} implies that
\begin{equation}\label{e14}
\|\nabla w_{\Om}\|^2_{\infty}\ge \frac2d\|w_{\Om}\|_{\infty}.
\end{equation}
This complements the upper bound \eqref{e10}.

\smallskip

Let
\begin{equation*}
E_a=\Big\{x\in \R^{d}: \sum_{i=1}^{d} \Big(\frac{x_i}{a_i}\Big)^2<1\Big\}.
\end{equation*}
denote the ellipsoid with semi-axes $a=\{a_1,a_2,...,a_{d}\}$ where  $a_i>0,\,i=1,...,d$.
The torsion function for $E_a$ is given by
\begin{equation*}
w_{E_a}(x)=\frac12\Big(\sum_{i=1}^{d} \Big(\frac{1}{a_i}\Big)^2\Big)^{-1}\Big(1-\sum_{i=1}^{d} \Big(\frac{x_i}{a_i}\Big)^2\Big),\,x\in E_a.
\end{equation*}
We put
\begin{equation*}
a_{-}=\min\{a_1,a_2,...\},\,a_{+}=\max\{a_1,a_2,...\},
\end{equation*}
and denote the eccentricity of $E(a)$ by
\begin{equation*}
\mathfrak{e}(E(a))=\frac{a_{+}}{a_{-}}-1,
\end{equation*}

Let $\tilde{x}'\in\partial E_a$ be such that $|\tilde{x}'|=a_{-}$. Note that
\begin{equation}\label{e18}
\|\nabla w_{E_a}\|_{\infty}=|\nabla w_{E_a}|(\tilde{x}')=|\nabla w_{E_a}|(-\tilde{x}')=\frac{1}{a_{-}}\Big(\sum_{i=1}^{d} \Big(\frac{1}{a_i}\Big)^2\Big)^{-1}<a_{-}.
\end{equation}
Let $B_d=\{y\in\R^d:|y|<1\}=E_{(1,...,1)}$. Then
\begin{equation*}
w_{B_d}(x)=\frac{1}{2d}(1-|x|^2),\,x\in B_d,
\end{equation*}
which implies equality in \eqref{e14} for $\Om=B_d$.

\smallskip

For a non-empty open set $\Om$ we let $v_{\Om}$ be the solution of
\begin{equation}\label{e3}
\begin{cases}
\frac{\partial v}{\partial t}=\Delta v,\qquad &x\in \Om, t>0,\\
\left.v\right|_{\partial\Om}=0,\qquad&t>0,\\
\left.v\right|_{t=0}=1,\qquad&x\in\Om.
\end{cases}
\end{equation}

The following quantity shows up in the statement of Theorem \ref{the1}(i). Let $d\ge 1$, and let
\begin{equation}\label{e21}
I_d=\int_{(0,\infty)}\frac{\dd t}{(\pi t)^{1/2}}v_{B_d}(0;t).
\end{equation}

\smallskip

In Theorem \ref{the1} below we consider the case of a finite cylinder with length $L$ where the base is an ellipsoid. We show that if $d=2$, if $\mathfrak{e}(E(a))$ is sufficiently small, and if the length of the cylinder is sufficiently large, then the maxima of the length of the gradient of the torsion function are at the centre of the base and at the centre of the top of the ellipsoid. If on the other hand $\mathfrak{e}(E(a))$ is large, and $L$ is sufficiently large, then the maxima of the gradient of the torsion function are on the lateral side of the cylinder.
\begin{theorem}\label{the1}
\begin{itemize}
\item[\textup{(i)}] If $L>0,\,d\ge 2$, and if
\begin{equation}\label{e27}
1+\frac{2a_{-}}{L}<I_d\sum_{i=1}^d\frac{a_{-}^2}{a_i^2},
\end{equation}
then
\begin{align}\label{e27a}
\|\nabla w_{(0,L)\times E_a}\|_{\infty}&=|\nabla w_{(0,L)\times E_a}|(0,0)=|\nabla w_{(0,L)\times E_a}|(L,0)\nonumber\\&
>|\nabla w_{(0,L)\times E_a}|(x_1,x'),\,\, \,0<x_1<L,\,x'\in \partial E_a.
\end{align}

\smallskip

\item[\textup{(ii)}] If $L>0,\,d\ge 2$, and if
\begin{equation}\label{e28}
\Big(\frac{2^{7/2}}{\pi}e^{-\pi L/(8a_{-})}+\frac{8G}{\pi^2}\Big)\sum_{i=1}^d\frac{a_{-}^2}{a_i^2}<1,
\end{equation}
then
\begin{align}\label{e28a}
\|\nabla w_{(0,L)\times E_a}\|_{\infty}&\ge |\nabla w_{(0,L)\times E_a}|(L/2,\tilde{x}')=|\nabla w_{(0,L)\times E_a}|(L/2,-\tilde{x}')\nonumber\\&>
|\nabla w_{(0,L)\times E_a}|(0,x')=|\nabla w_{(0,L)\times E_a}|(L,x'),\,\,\,x'\in E_a,
\end{align}
where $G=0.915965...$ is Catalan's constant defined by
\begin{equation}\label{e28b}
G=\sum_{k=0}^\infty\frac{(-1)^k}{(2k+1)^2}.
\end{equation}
\end{itemize}
\end{theorem}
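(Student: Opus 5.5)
Write $C=(0,L)\times E_a$ and $w=w_C$. Everything will be expressed through the heat semigroup. Since $w_C(x)=\int_0^\infty v_C(x;t)\,\dd t$ and the Dirichlet heat kernel of a product factorises, we have
\begin{equation*}
v_C((x_1,x');t)=v_{(0,L)}(x_1;t)\,v_{E_a}(x';t).
\end{equation*}
Hence every component of $\nabla w$ is a time integral of products of one-dimensional and $d$-dimensional heat quantities.

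\emph{Gradient at the centres of the two ends.} By symmetry $\nabla w(0,0)$ points in the $x_1$ direction, and
\begin{equation*}
|\nabla w|(0,0)=\int_0^\infty \partial_{x_1}v_{(0,L)}(0;t)\,v_{E_a}(0;t)\,\dd t .
\end{equation*}
For the half-line, $\partial_{x_1}v_{(0,\infty)}(0;t)=(\pi t)^{-1/2}$. For the interval, the method of images gives $\partial_{x_1}v_{(0,L)}(0;t)\ge(\pi t)^{-1/2}$ minus an error; the $2a_-/L$ term in \eqref{e27} suggests the error should be handled crudely. I would bound the interval contribution below using monotonicity of $v$ under domain inclusion, together with a comparison of $E_a$ with the ball $a_-B_d\subset E_a$. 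This gives $v_{E_a}(0;t)\ge v_{a_-B_d}(0;t)=v_{B_d}(0;t/a_-^2)$, and scaling then yields
\begin{equation*}
|\nabla w|(0,0)\ge a_- I_d-(\text{correction of order }a_-^2/L).
\end{equation*}

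\emph{Gradient on the lateral side.} At a point $(x_1,x')$ with $x'\in\partial E_a$ the gradient is purely in the $x'$ directions. It equals $\int_0^\infty v_{(0,L)}(x_1;t)\,|\nabla v_{E_a}(x';t)|\,\dd t$, which is at most $|\nabla w_{E_a}|(x')\le \|\nabla w_{E_a}\|_\infty$ because $v_{(0,L)}\le1$. By \eqref{e18},
\begin{equation*}
\|\nabla w_{E_a}\|_\infty=a_-\Big(\sum_i a_-^2/a_i^2\Big)^{-1}.
\end{equation*}
Strict inequality holds because $v_{(0,L)}<1$. Comparing the two bounds, \eqref{e27} gives $|\nabla w|(0,0)>$ the lateral values. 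The same argument at $(L,0)$ follows by reflection. Since $|\nabla w|$ is maximised on the boundary, it remains to show that the centre beats the other points of the end faces. This requires showing $\partial_{x_1}w(0,x')\le\partial_{x_1}w(0,0)$. I expect this from monotonicity in $x'$, obtained from the product formula and the fact that $v_{E_a}(\cdot;t)$ is maximal at the centre: ellipsoids are convex and symmetric, so $v_{E_a}$ is symmetric decreasing along lines through the centre, by log-concavity.

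\emph{Part (ii).} By the same product formula, $|\nabla w|(L/2,\tilde x')=\int_0^\infty v_{(0,L)}(L/2;t)\,|\nabla v_{E_a}(\tilde x';t)|\,\dd t$, and I would bound this below by $\|\nabla w_{E_a}\|_\infty$ minus a term controlled by $1-v_{(0,L)}(L/2;t)$. Using the eigenfunction expansion
\begin{equation*}
v_{(0,L)}(L/2;t)=\frac4\pi\sum_k\frac{(-1)^k}{2k+1}\,e^{-(2k+1)^2\pi^2t/L^2}
\end{equation*}
together with $|\nabla v_{E_a}|\le$ something decaying like $e^{-t\lambda(a_-B)}$ should produce the exponential term $e^{-\pi L/(8a_-)}$ in \eqref{e28}; the constants there indicate a splitting of the time integral at some $t\sim La_-$. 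For the end faces, $|\nabla w|(0,x')=\int_0^\infty\partial_{x_1}v_{(0,L)}(0;t)\,v_{E_a}(x';t)\,\dd t$. I would bound this above by replacing $E_a$ with the slab $\{|x'\cdot e|<a_-\}\supset E_a$ oriented along the shortest axis, and $(0,L)$ with the half-line. This reduces the problem to the two-dimensional torsion of a half-strip, whose gradient at the end is given by a Fourier series in $\cosh/\sinh$ terms; these sum to the Catalan constant, giving $8Ga_-/\pi^2$. Then \eqref{e28} gives the strict inequality.

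The main obstacle is producing the exact constants, especially the Catalan term via the explicit half-strip series and the exponential error, and rigorously justifying that the centre of the end face dominates the rest of that face in (i).
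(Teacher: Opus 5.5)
Your skeleton matches the paper's. In (i) you bound $(0,0)$ from below via $\partial_{x_1}v_{(0,L)}(0;t)\approx(\pi t)^{-1/2}$ and $a_-B_d\subset E_a$, and bound the lateral side by $\|\nabla w_{E_a}\|_\infty$; your $v_{(0,L)}\le 1$ argument is the product-formula form of the paper's comparison with $\R\times E_a$. In (ii) you bound the end faces by the half-line/slab value $a_-I_1=8Ga_-/\pi^2$. For the half-strip the series is in $e^{-(2k+1)\pi x_1/(2a_-)}$, not $\cosh/\sinh$, but it does give $\frac{8a_-}{\pi^2}\sum_k\frac{(-1)^k}{(2k+1)^2}$ at the centre of the end. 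At $(L/2,\tilde x')$ you use the lower bound $|\nabla w_{E_a}|(\tilde x')-\int_0^\infty(1-v_{(0,L)}(L/2;t))\,|\nabla v_{E_a}|(\tilde x';t)\,\dd t$. Using Brascamp--Lieb log-concavity plus central symmetry to get $v_{E_a}(x';t)\le v_{E_a}(0;t)$ is a fine substitute for the paper's appeal to the heart of a convex body.

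The genuine gap is the error integral in (ii), which is exactly where the constants in \eqref{e28} come from, and the ingredient you propose fails. Suppose $a_-B$ means the inscribed ball $a_-B_d$. Then a bound $|\nabla v_{E_a}|(\tilde x';t)\le Ce^{-t\lambda(a_-B_d)}$ is false. Inclusion gives $\lambda(E_a)<\lambda(a_-B_d)$, since under \eqref{e28} $E_a$ is not a ball. But by Hopf's lemma $e^{t\lambda(E_a)}|\nabla v_{E_a}|(\tilde x';t)$ tends to a positive limit. Moreover $|\nabla v_{E_a}|(\tilde x';t)$ blows up like $(\pi t)^{-1/2}$ as $t\downarrow 0$, so no pure exponential bound can hold. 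The comparison domain has to contain $E_a$, not be contained in it.

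The missing idea is the parabolic analogue of Lemma \ref{lem0}. By \eqref{e2}, $v_{\Om}(\cdot;t)$ is monotone under inclusion, hence so is its normal derivative at a common boundary point. Applied with the slab of width $2a_-$ tangent to $E_a$ at $\pm\tilde x'$, this gives
\begin{equation*}
|\nabla v_{E_a}|(\tilde x';t)\le\frac{2}{a_-}\sum_{n=0}^{\infty}e^{-(2n+1)^2\pi^2t/(4a_-^2)}\le\Big(\frac{2}{\pi t}\Big)^{1/2}e^{-\pi^2t/(8a_-^2)},
\end{equation*}
the second step by halving the exponent and applying \eqref{e37} with $L=2^{3/2}a_-$.

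You also need a small-time Gaussian bound on $1-v_{(0,L)}(L/2;t)$. The eigenfunction expansion you quote is useless there, since small $t$ is the relevant regime. The paper uses $1-v_{(0,L)}(L/2;t)\le 2^{3/2}e^{-L^2/(32t)}$; the reflection principle even gives $2e^{-L^2/(16t)}$. No splitting of the time integral is needed. The identity $\int_0^\infty t^{-1/2}e^{-A/t-Bt}\,\dd t=(\pi/B)^{1/2}e^{-2(AB)^{1/2}}$ with $A=L^2/32$, $B=\pi^2/(8a_-^2)$ yields exactly $\frac{2^{7/2}a_-}{\pi}e^{-\pi L/(8a_-)}$. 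Comparing the resulting lower bound with $8Ga_-/\pi^2$ is precisely \eqref{e28}.

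A smaller point of the same kind arises in (i): a correction ``of order $a_-^2/L$'' does not reproduce \eqref{e27}. You need a uniform bound $\partial_{x_1}v_{(0,L)}(0;t)\ge(\pi t)^{-1/2}-4/L$; the paper gets it by comparing the sum over odd $k$ with the sum over even $k$ and then with an integral. This must be integrated against $v_{E_a}(0;t)$, so that the correction is $\frac{4}{L}w_{E_a}(0)=\frac{2}{L}\big(\sum_i a_i^{-2}\big)^{-1}$.
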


\medskip

In Proposition \ref{prop1} below we show that $I_d$ defined in \eqref{e21} is finite for all $d=1,2,...$, and that $dI_d>1$ if and only if $d\ge 2$. The latter guarantees that, for $d\ge 2$, the collection of ellipsoids in \eqref{e27} is non-empty.

Requirement \eqref{e27} can only hold if $\mathfrak{e}(E(a))$
and $a_{-}/L$ are sufficiently small: that is $L$ is sufficiently large, and $\mathfrak{e}(E_a)$ is small. In this case the maxima of $|\nabla w_{(0,L)\times E_a}|$ are at $(0,0)$ and $(L,0)$.

Requirement \eqref{e28} can only hold if $\sum_{i=1}^d\frac{a^2_{-}}{a_i^2}<\frac{\pi^2}{8G}$ and $a_{-}/L$ is sufficiently small. This collection of ellipsoids is non-empty since $\frac{\pi^2}{8G}>1$. In this case the maxima of $|\nabla w_{(0,L)\times E_a}|$ are on the lateral side of boundary of the cylinder. We note that \eqref{e28} is never satisfied if there are two-semi axes with length $a_{-}$.

If $d=1$ then the ellipsoid is just an interval say $(-a,a)$, and $(0,L)\times (-a,a)$ is a rectangle. Since $a=a_{-}=a_{+}$  \eqref{e27} is not satisfied for $d=1$ since $I_1< 1$. It was shown in \cite[pp.275-277]{TG} that the maxima of $|\nabla w_{(0,L)\times (-a,a)}|$ are at the mid-points of the long sides of the rectangle. If $L=2a$ then maxima are at the four midpoints of the sides of the square.

We do not have such a complete picture for ellipsoidal bases in higher dimensions. Indeed, the two regimes in \eqref{e27}, \eqref{e28} (for $d=2$)  provide bounds for the value of a critical eccentricity, where the changeover from a maximum at the base to a maximum at the lateral side of the cylinder would take place. Several quantities may be needed to give the complete changeover picture for $d>2$.

Conditions \eqref{e27} and \eqref{e28} and the conclusions in Theorem \ref{the1} are invariant under homotheties: that is replacing $a_i,\,i=1,...,d$ and $L$ by $\lambda a_i,\,i=1,...,d$ and $\lambda L$ respectively, where $\lambda>0,$ and where the homothety is with respect to the centre of the base.

In Proposition \ref{prop1} below we give bounds for $I_d$ which guarantee that the collection of ellipsoids in Theorem \ref{the1}(i) is non-empty for $d\ge 2$. We adopt the standard notation $J_{\nu},\,\nu\ge -1$ for the Bessel function of the first kind, and denote by $j_{\nu,1}<j_{\nu,2}<....$ its strictly positive zeros.
\begin{proposition}\label{prop1}
\begin{itemize}
\item[\textup{(i)}] If $d\ge 1$, then
\begin{equation}\label{e22}
I_d\le \Big(\frac{2}{\pi d}\Big)^{1/2},
\end{equation}
\item[\textup{(ii)}]
\begin{equation}\label{e22a}
I_1=\frac{8G}{\pi^2}<1.
\end{equation}
\item[\textup{(iii)}]
\begin{equation}\label{e23}
\hspace{12mm}I_2=\sum_{k=1}^{\infty}\frac{2}{j^2_{0,k}J_1(j_{0,k})}>0.50976,
\end{equation}
\begin{equation}\label{e24}
I_3=\frac{2\log 2}{\pi},
\end{equation}
and
\begin{equation} \label{e25}
I_d>\Big(\frac{d}{2}\Big)^{-1/2}\Big(\Big(\frac{d}{2}+1\Big)^{1/2}+1\Big)^{-1},\, d\ge 4.
\end{equation}
Moreover,
\begin{equation}\label{e26}
I_d> \frac1d,\,d\ge 2.
\end{equation}
\end{itemize}
\end{proposition}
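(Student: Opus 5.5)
The starting point is a probabilistic reading of \eqref{e21}. Let $T$ be the exit time from $B_d$ of Brownian motion started at $0$ (with generator $\Delta$). Then $v_{B_d}(0;t)=\Pa(T>t)$, and by Fubini
\begin{equation*}
I_d=\mathbb{E}\int_0^T\frac{\dd t}{(\pi t)^{1/2}}=\frac{2}{\sqrt{\pi}}\,\mathbb{E}\big(T^{1/2}\big).
\end{equation*}
The exit-time moments are given by torsion-type functions: $\mathbb{E}T=w_{B_d}(0)=\frac1{2d}$, and $\mathbb{E}T^2=u(0)$, where $-\Delta u=2w_{B_d}$ in $B_d$ and $u|_{\partial B_d}=0$. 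This $u$ is an explicit radial polynomial, and I expect $\mathbb{E}T^2=\frac{d+4}{4d^2(d+2)}$.

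\emph{Part (i).} By Jensen, $\mathbb{E}T^{1/2}\le(\mathbb{E}T)^{1/2}=(2d)^{-1/2}$, so $I_d\le \frac{2}{\sqrt\pi}(2d)^{-1/2}=(2/(\pi d))^{1/2}$, which is \eqref{e22}.

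\emph{Parts (ii) and the formulas for $I_2,I_3$.} Here I use the eigenfunction expansion of $v$ together with $\int_0^\infty t^{-1/2}e^{-\mu t}\,\dd t=(\pi/\mu)^{1/2}$. Each exponential $c_ke^{-\mu_kt}$ in $v(0;t)$ then contributes $c_k\mu_k^{-1/2}$ to $I_d$.
\begin{itemize}
\item[$\bullet$] For $d=1$ on $(-1,1)$ we have $v(0;t)=\frac4\pi\sum_{k\ge0}\frac{(-1)^k}{2k+1}e^{-(2k+1)^2\pi^2t/4}$. This gives $I_1=\frac{8}{\pi^2}\sum_{k\ge0}\frac{(-1)^k}{(2k+1)^2}=\frac{8G}{\pi^2}$. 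The bound $I_1<1$ follows from $G<\sum_{k\ge 0}(2k+1)^{-2}=\pi^2/8$.
\item[$\bullet$] For $d=2$ the radial expansion is $v(0;t)=\sum_k\frac{2}{j_{0,k}J_1(j_{0,k})}e^{-j_{0,k}^2t}$, which yields the series in \eqref{e23}. Since $J_1(j_{0,k})$ alternates in sign and $|j_{0,k}^2J_1(j_{0,k})|$ increases (asymptotically like $k^{3/2}$), the series is alternating with decreasing terms. Truncating after an even number of terms, using tabulated zeros, then gives the numerical lower bound $0.50976$.
\item[$\bullet$] For $d=3$, the substitution $rv$ reduces the problem to a one-dimensional one on $(0,1)$. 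This gives $v(0;t)=2\sum_{k\ge1}(-1)^{k+1}e^{-k^2\pi^2t}$, hence $I_3=\frac{2}{\pi}\sum_{k\ge 1}\frac{(-1)^{k+1}}{k}=\frac{2\log2}{\pi}$.
\end{itemize}

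\emph{The lower bound \eqref{e25} for $d\ge4$.} This is the step I expect to be the main obstacle, since it needs a reverse-Jensen argument. I would use a pointwise polynomial minorant of $\sqrt{x}$. Concavity and tangency at $x=c$ give, for every $c>0$,
\begin{equation*}
\sqrt{x}\ge \frac{3cx-x^2}{2c^{3/2}}, \qquad x\ge 0,
\end{equation*}
since the difference factors as $\sqrt x\,(\sqrt x-\sqrt c)^2(\sqrt x+2\sqrt c)/(2c^{3/2})$. Taking expectations gives
\begin{equation*}
\mathbb{E}T^{1/2}\ge\frac{3c\,\mathbb{E}T-\mathbb{E}T^2}{2c^{3/2}},
\end{equation*}
and I then optimise over $c$ using the explicit first and second moments. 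If this does not produce exactly the stated closed form $(d/2)^{-1/2}((d/2+1)^{1/2}+1)^{-1}$, I would instead try a two-parameter minorant of the form $\sqrt x\ge \alpha x/(\beta+x)$, optimised in the same way. The shape of the stated bound, with its $(d/2+1)^{1/2}$, suggests that a rational minorant of this kind combined with the ratio $\mathbb{E}T^2/(\mathbb{E}T)^2$ is what is intended.

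\emph{The bound \eqref{e26}.} For $d=2$ and $d=3$ this follows by comparing the explicit values: $0.50976>1/2$ and $2\log2/\pi\approx0.441>1/3$. For $d\ge4$ it reduces to the elementary inequality $d^{1/2}2^{1/2}>(d/2+1)^{1/2}+1$, which I will verify directly for $d\ge4$.
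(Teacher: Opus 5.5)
Parts (i), (ii) and your evaluations of $I_2$ and $I_3$ follow the paper: Cauchy--Schwarz/Jensen on $\mathbb{E}T^{1/2}$, then eigenfunction expansions. Your route to \eqref{e25} is genuinely different, and it works, although you stopped one step short of seeing this.

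Take $m_1=\mathbb{E}T=\frac1{2d}$ and $m_2=\mathbb{E}T^2=\frac{d+4}{4d^2(d+2)}$; your value of $m_2$ is correct. The optimal choice is $c=m_2/m_1$, which gives $\mathbb{E}T^{1/2}\ge m_1^{3/2}m_2^{-1/2}$ (equivalently, H\"older's inequality $\mathbb{E}T\le(\mathbb{E}T^{1/2})^{2/3}(\mathbb{E}T^2)^{1/3}$). Hence
\[
I_d\ge\Big(\frac{2}{\pi d}\Big)^{1/2}\Big(\frac{d+2}{d+4}\Big)^{1/2}.
\]
This is not the stated closed form. However, its ratio to the right-hand side of \eqref{e25} is $\pi^{-1/2}\big(\frac{d+2}{d+4}\big)^{1/2}\big((\frac d2+1)^{1/2}+1\big)$, which is increasing in $d$ and about $1.33$ at $d=4$. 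So \eqref{e25} follows, and the fallback rational minorant is unnecessary.

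The closed form actually has a different origin. The paper uses \eqref{e41} with $\varphi_1(0)=\|\varphi_1\|_\infty$ to get $v_{B_d}(0;t)\ge e^{-t\lambda(B_d)}$, hence $I_d\ge 1/j_{\frac d2-1,1}$. It then applies Chambers' upper bound on $j_{\nu,1}$.

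The two routes buy different things:
\begin{itemize}
\item The paper's eigenfunction bound is a general principle valid for any domain of finite measure. For the ball, however, it only has order $1/d$, since $j_{\nu,1}\sim\nu$, and it needs external Bessel-zero estimates.
\item Your bound is self-contained and has the correct order $d^{-1/2}$. Combined with \eqref{e22}, it shows $I_d\sim(2/(\pi d))^{1/2}$. It also gives \eqref{e26} directly for $d\ge 3$: about $0.389>1/3$ at $d=3$, where $1/j_{1/2,1}=1/\pi<1/3$ would not suffice.
\item For $d=2$ your bound gives only about $0.461<1/2$, so the explicit series for $I_2$ is still needed, as you planned.
\end{itemize}

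Two small rigour points remain.
\begin{itemize}
\item For $d=3$, the termwise-integrated series $\sum_k 2/(\pi k)$ is not absolutely convergent, so ``each exponential contributes $c_k\mu_k^{-1/2}$'' is not automatic. Pair consecutive terms, $e^{-(2k-1)^2\pi^2t}-e^{-(2k)^2\pi^2t}\ge0$, and apply Tonelli, as the paper does.
\item For $d=2$, the truncation bound needs $|j_{0,k}^2J_1(j_{0,k})|$ to be increasing for all $k\ge5$, and the $k^{3/2}$ asymptotics alone do not give this. The paper obtains it from $|J_0'(j_{0,k})|=2/(\pi j_{0,k}M_0(j_{0,k}))$ and the monotonicity of the modulus $M_0$.
\end{itemize}
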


The torsion function is closely connected to Brownian motion. Tools of Brownian motion have been used in e.g. \cite{Bu} to prove the existence of maximisers of shape optimisation problems involving the maximum of the length of the gradient of the torsion function. This connection is as follows.
Let $(B(s),\,s\ge 0; \Pa_x,\,x\in \R^d)$ be Brownian motion with generator $\Delta$, and let
\begin{equation}\label{e1}
T(\Om)=\inf\{s\ge 0: B(s)\not\in \Om\}
\end{equation}
be the lifetime of Brownian motion in $\Om$. It is well known \cite{PS} that
\begin{equation}\label{e2}
\Pa_x[T(\Om)>t]=v_{\Om}(x;t),
\end{equation}
where $v_{\Om}$ is the solution of \eqref{e3}.
The expected value of the random variable $T(\Om)$ is given by
\begin{equation}\label{e4}
\mathbb{E}_x[T(\Om)]=\int_{(0,\infty)}\dd t\,\Pa_x[T(\Om)>t]=\int_{(0,\infty)}\dd t\,v_{\Om}(x;t)=w_{\Om}(x).
\end{equation}

\smallskip

Let $p_{\Om}(x,y;t),\,x\in\Om,\,y\in\Om,\,t>0$ denote the Dirichlet heat kernel for $\Om$. This heat kernel is non-negative, symmetric in $x$ and $y$, and satisfies the heat semigroup property.  If the spectrum of the Dirichlet Laplacian for $\Om$ is discrete, then the Dirichlet heat kernel has an $L^2(\Om)$ eigenfunction expansion. Moreover,
\begin{equation}\label{e8}
v_{\Om}(x;t)=\int_{\Om} \dd y\, p_{\Om}(x,y;t).
\end{equation}
By \eqref{e3}, \eqref{e4} and \eqref{e8} we have that
\begin{equation*}
w_{\Om}(x)=\int_{(0,\infty)}\dd t\int_{\Om} \dd y\, p_{\Om}(x,y;t).
\end{equation*}

\smallskip

It is of great interest to generalise the results for ellipsoids in Theorem \ref{the1} to general convex sets.  Let $C$ be convex in $\R^2$ with inradius $r(C)$, and circumscribed radius $R(C)$ respectively. One would expect that if $$ \frac{R(C)}{r(C)}<1+\varepsilon,$$
with $\varepsilon$ sufficiently small, then this would correspond to ellipsoids with small eccentricity, and the maximum length of the gradient is at the base. Since $\varepsilon$ is small the centres of the inball and circumscribed ball respectively are close. This suggests that the maximum of the length of the gradient is attained near the centre of the inball of $C$. Replacing $E(a)$ by $C$ and $a_{-}B_d$ by $r(C)B_d$ in \eqref{e78} below provides a lower bound for that maximum. For an upper bound one might use that $(0,L)\times C\subset \R\times C$. Lemma \ref{lem0} below then yields that for any point $x_0$ on the lateral side of the cylinder $|\nabla w_{(0,L)\times C}|(x_0)\le \|\nabla w_C\|_{\infty}$. In the case where $C$ is an ellipse we use the explicit expression for that $L^{\infty}$ norm.
The proof in Section \ref{sec3} then makes use of the fact that $2I_2=1.195..>1$, which indicates a fine margin for estimating $\|\nabla w_C\|_{\infty}$ for more general $C$.

We have the following monotonicity result.
\begin{lemma}\label{lem0}
Let $\Omega_1$ and $\Omega_2$ be convex sets such that $\Omega_1\subset \Omega_2\subset \R^d$. If there exists $x_0\in \partial \Omega_1 \cap \partial \Omega_2$ such that
$\partial\Om_1$ is smooth in a neighbourhood of $x_0$,
then
\begin{equation}\label{ex1}
|\nabla w_{\Om_1}|(x_0)\le |\nabla w_{\Om_2}|(x_0).
\end{equation}
\end{lemma}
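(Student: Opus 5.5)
The plan is to combine domain monotonicity of the torsion function with the fact that both torsion functions vanish at the common boundary point $x_0$. By the maximum principle (or by \eqref{e4} and the inclusion $T(\Om_1)\le T(\Om_2)$ of exit times), $\Om_1\subset\Om_2$ gives
\begin{equation*}
0\le w_{\Om_1}(x)\le w_{\Om_2}(x),\qquad x\in \Om_1 .
\end{equation*}
Both functions vanish at $x_0$: for $w_{\Om_1}$ this holds because $x_0\in\partial\Om_1$ is a regular point (convex sets satisfy the exterior cone condition), and for $w_{\Om_2}$ because $x_0\in\partial\Om_2$ and $\partial\Om_2$ is regular by convexity. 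The idea is then to compare the two functions along the inward normal to $\partial\Om_1$ at $x_0$.

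Let $\nu$ be the inward unit normal to $\partial\Om_1$ at $x_0$; it exists since $\partial\Om_1$ is smooth near $x_0$. Because $\Om_1\subset\Om_2$ are convex and share the boundary point $x_0$, the tangent hyperplane $H$ to $\partial\Om_1$ at $x_0$ must also be a supporting hyperplane of $\Om_2$ at $x_0$. Indeed, any supporting hyperplane of $\Om_2$ at $x_0$ also supports $\Om_1$ at $x_0$, and by smoothness the only supporting hyperplane of $\Om_1$ there is $H$.

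For small $s>0$ the point $x_0+s\nu$ lies in $\Om_1$, and the comparison gives
\begin{equation*}
\frac{w_{\Om_1}(x_0+s\nu)}{s}\le \frac{w_{\Om_2}(x_0+s\nu)}{s}.
\end{equation*}
Letting $s\downarrow 0$ yields $\partial_\nu w_{\Om_1}(x_0)\le \partial_\nu w_{\Om_2}(x_0)$. Three facts then finish the argument:
\begin{itemize}
\item[(a)] Since $w_{\Om_1}$ vanishes on $\partial\Om_1$, which is smooth near $x_0$, its gradient at $x_0$ is normal to $\partial\Om_1$. Hence $|\nabla w_{\Om_1}|(x_0)=\partial_\nu w_{\Om_1}(x_0)\ge 0$.
\item[(b)] In general $\partial_\nu w_{\Om_2}(x_0)\le |\nabla w_{\Om_2}|(x_0)$.
\item[(c)] Both quantities should make sense: $\nabla w_{\Om_1}$ extends continuously to $x_0$ by boundary regularity for the Poisson problem near a smooth boundary portion. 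For $\Om_2$, $|\nabla w_{\Om_2}|(x_0)$ should be read as the boundary limit (or limsup) of $|\nabla w_{\Om_2}|$; this is finite because $|\nabla w_{\Om_2}|$ is bounded by \eqref{e10}.
\end{itemize}

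The main obstacle is (c) for $\Om_2$, since $\partial\Om_2$ need not be smooth at $x_0$ (for instance, on the lateral side of a cylinder approaching an edge). I would handle it as follows. First, $w_{\Om_2}$ is Lipschitz by \eqref{e10}, so the directional difference quotient is bounded by $\|\nabla w_{\Om_2}\|_\infty$ locally. Second, by the mean value theorem, $w_{\Om_2}(x_0+s\nu)/s=\partial_\nu w_{\Om_2}(x_0+\theta s\nu)$ for some $\theta\in(0,1)$, which is at most $|\nabla w_{\Om_2}|(x_0+\theta s\nu)$. Taking $\limsup$ gives \eqref{ex1}, with the right-hand side interpreted as $\limsup_{x\to x_0}|\nabla w_{\Om_2}|(x)$. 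This limsup equals the pointwise value wherever $\partial\Om_2$ is smooth at $x_0$, which is the situation in the application (lateral points of $\R\times C$, with $C$ an ellipse).
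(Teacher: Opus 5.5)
Your proof is correct and takes essentially the same route as the paper. Both arguments use domain monotonicity $w_{\Omega_1}\le w_{\Omega_2}$ on $\Omega_1$, the vanishing of both torsion functions at $x_0$, and a comparison of the quotients $w_{\Omega_i}(x_0+s\nu)/s$ along the inward normal as $s\downarrow 0$.

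Your mean-value/limsup treatment of $\Omega_2$ is a genuine refinement. The paper simply asserts that $\partial\Omega_2$ is smooth near $x_0$, but convexity and $\Omega_1\subset\Omega_2$ only force a unique supporting hyperplane of $\Omega_2$ at $x_0$. In the paper's applications $\partial\Omega_2$ happens to be smooth there, so its conclusion stands.
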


\begin{proof}
Since $\Om_1\subset \Om_2$ and since  $\partial \Om_1$ is smooth in a neighbourhood of $x_0$,  $\partial \Om_2$ is smooth in a neighbourhood of $x_0$.
Let $\nu_0$ be the unit vector normal to $\partial \Omega_1$ at $x_0$, pointing inwards. Then
\begin{equation}\label{ex2}
|\nabla w_{\Omega_1}| (x_0) = \lim_{t\to 0} \frac{w_{\Omega_1}(x_0+t\nu) - w_{\Omega_1}(x_0)}{t} = \lim_{t\to 0} \frac{w_{\Omega_1}(x_0+t\nu)}{t}.
\end{equation}
Here we have used the fact that the torsion function vanishes on the boundary.

Note that the unit vector to $\partial \Omega_2$ at $x_0$ is again $\nu_0$. Hence
\begin{equation}\label{ex3}
|\nabla w_{\Omega_2}| (x_0)= \lim_{t\to 0} \frac{w_{\Omega_2}(x_0+t\nu) }{t}\,.
\end{equation}
By the maximum principle, we infer that $w_{\Omega_1}\leq w_{\Omega_2}$ in the common domain $\Omega_1$. In particular, for all $t$ small,
\begin{equation}\label{ex4}
w_{\Omega_1}(x_0+t\nu) \leq w_{\Omega_2}(x_0+t\nu).
\end{equation}
Inequality \eqref{ex1} follows by \eqref{ex2},\eqref{ex3} and \eqref{ex4}.
\end{proof}

\medskip

The proofs are organised as follows. In Section \ref{sec2} we obtain bounds (Lemma \ref{lem1}) for the length of the gradient of the torsion function of $(0,L)\times C$ at $C$, where $C$ is an open set with $\lambda(C)>0$. In Sections \ref{sec3} and \ref{sec4} we prove Theorem \ref{the1}(i) and (ii) respectively. In Section  \ref{sec5} we prove Proposition \ref{prop1}.

\section{Gradient of the torsion function at the base of a cylinder \label{sec2}}

In Lemma \ref{lem1} below we obtain bounds for the gradient of the torsion function of a cylinder $(0,L)\times C$ at an arbitrary point in its base. We see that, for $L$ large, the leading term is determined by a function which depends on $C$ only, and which is independent of $L$. The lower bound in \eqref{e29} will be used in the proof of Theorem \ref{the1}(i) in Section \ref{sec3}. There it will be shown that, for $C=E_a$ and $x'$ in the centre of $E_a$ this lower bound is strictly larger than the length of the gradient at any point at the lateral side of that cylinder. The upper bound in \eqref{e29} will be used in the proof of Theorem \ref{the1}(ii) in Section \ref{sec4}. There it will be shown that there are two points on the lateral side of the cylinder which have a gradient with length larger than the maximum length of the gradient at the base $E_a$.

We denote, with slight abuse of notation, the torsion function for open sets $C\subset \R^d$ by $w_C$, the one-dimensional Dirichlet heat kernel for the interval $(0,L)$ by $p_{(0,L)}(x_1,y_1;t),\,x_1\in (0,L),\,y_1\in (0,L),\,t>0$, and the $d$-dimensional Dirichlet heat kernel for $C$ by $p_C(x',y';t),\,x'\in C,\,y'\in C,\,t>0$.

\begin{lemma}\label{lem1} Let $d\ge 2$, and let $C$ be an open subset of $\R^d$ with $\lambda(C)>0$. If $L>0$, then
\begin{align}\label{e29}
\int_{(0,\infty)}\frac{\dd t}{(\pi t)^{1/2}}v_{C}(x';t)-\frac{4w_C(x')}{L}&\le |\nabla w_{(0,L)\times C}|(0,x')\nonumber\\&\le \int_{(0,\infty)}\frac{\dd t}{(\pi t)^{1/2}}v_C(x';t)\nonumber\\&
\le \frac{2}{\pi^{1/2}}\big(w_C(x')\big)^{1/2},\,x'\in C.
\end{align}
\end{lemma}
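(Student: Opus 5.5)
The plan is to use the product structure of the Dirichlet heat kernel on the cylinder,
\begin{equation*}
p_{(0,L)\times C}((x_1,x'),(y_1,y');t)=p_{(0,L)}(x_1,y_1;t)\,p_C(x',y';t).
\end{equation*}
This gives
\begin{equation*}
w_{(0,L)\times C}(x_1,x')=\int_{(0,\infty)}\dd t\, v_{(0,L)}(x_1;t)\,v_C(x';t).
\end{equation*}
Since $w$ vanishes on the base $\{0\}\times C$, the tangential derivatives in $x'$ vanish there. Hence $|\nabla w_{(0,L)\times C}|(0,x')=\partial_{x_1}w_{(0,L)\times C}(0,x')$, and I compute this as $\lim_{x_1\downarrow 0}w(x_1,x')/x_1$. (The base is flat and $x'$ is an interior point of $C$, so $w$ is smooth up to the base near $(0,x')$.) The whole problem therefore reduces to estimating $x_1^{-1}v_{(0,L)}(x_1;t)$ as $x_1\downarrow 0$ and then interchanging the limit with the $t$-integral.

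For the upper bound, I use domain monotonicity $v_{(0,L)}(x_1;t)\le v_{(0,\infty)}(x_1;t)=\mathrm{erf}\big(x_1/(2t^{1/2})\big)$. The map $s\mapsto \mathrm{erf}(s)/s$ is decreasing with limit $2/\pi^{1/2}$, so
\begin{equation*}
x_1^{-1}v_{(0,\infty)}(x_1;t)\le (\pi t)^{-1/2},
\end{equation*}
with equality in the limit. Dominated convergence is justified because $(\pi t)^{-1/2}v_C(x';t)$ is integrable: near $t=0$ we have $v_C\le 1$, and for large $t$, $v_C(x';t)$ decays exponentially since $\lambda(C)>0$. This yields the middle inequality. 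The last inequality follows from Cauchy--Schwarz after splitting at a time $T$:
\begin{equation*}
\int_0^\infty (\pi t)^{-1/2}v_C\,\dd t\le \int_0^T(\pi t)^{-1/2}\dd t+(\pi T)^{-1/2}\int_T^\infty v_C\,\dd t\le 2(T/\pi)^{1/2}+(\pi T)^{-1/2}w_C(x').
\end{equation*}
Choosing $T=w_C(x')/2$ gives $\frac{2^{3/2}}{\pi^{1/2}}w_C^{1/2}$, which is off by $\sqrt2$. Instead I take $T=w_C(x')$ and use $v_C\le 1$ together with $\int_0^\infty v_C=w_C$ more carefully. The bathtub principle is the right tool: among $0\le v\le1$ with $\int v=w$, the integral $\int t^{-1/2}v$ is maximised by $v=\mathbf 1_{[0,w]}$, giving exactly $\frac{2}{\pi^{1/2}}w_C(x')^{1/2}$.

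For the lower bound, I need $v_{(0,L)}(x_1;t)\ge v_{(0,\infty)}(x_1;t)-(\text{correction from the far end})$. By the probabilistic representation, $v_{(0,\infty)}(x_1;t)-v_{(0,L)}(x_1;t)$ is the probability of exiting through $L$ before $0$ and before time $t$. This is at most $\Pa_{x_1}[\text{hit }L\text{ before }0]=x_1/L$, which is independent of $t$. That bound alone would give a correction $\int v_C/L=w_C/L$, except that it must be cut off, since $v_{(0,\infty)}$ itself is not integrable. So the correction I actually get is $\int_0^\infty \dd t\, v_C(x';t)\,x_1^{-1}\big(v_{(0,\infty)}-v_{(0,L)}\big)\le w_C(x')/L$. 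Dividing by $x_1$ and letting $x_1\to0$ gives $-w_C/L$, which is even better than the stated $-4w_C/L$.

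The main obstacle is making this difference estimate and the limit rigorous. Writing $w_{(0,L)\times C}$ as an integral of $v_{(0,\infty)}-\big(v_{(0,\infty)}-v_{(0,L)}\big)$ needs each piece to be integrable against $v_C$, which holds because of the exponential decay of $v_C$. If the harmonic-measure bound turns out to be awkward, I would fall back on the method of images, $p_{(0,L)}\ge p_{(0,\infty)}-p_{\R}(\cdot,2L-\cdot)$-type terms, estimated directly. This route would plausibly produce the constant $4$ in the statement.
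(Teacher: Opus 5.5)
Your proof is correct, and it takes a genuinely different route from the paper's.

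\emph{The paper's route.} It expands $v_{(0,L)}$ in its sine series and differentiates termwise to obtain the exact formula \eqref{e34}, $\partial_{x_1}w_{(0,L)\times C}(0,x')=\int_0^\infty K_L(t)\,v_C(x';t)\,\dd t$ with $K_L(t)=\frac4L\sum_{k\ \mathrm{odd}}e^{-t\pi^2k^2/L^2}$. It bounds $K_L$ below by replacing the odd-index terms with the even-index ones and comparing with a Gaussian integral; this is where the $4/L$ comes from. It bounds $K_L$ above by the Jacobi theta transformation, together with Lemma \ref{lem0} for the half-cylinder. It finishes with $\frac{2}{\pi^{1/2}}\mathbb{E}_{x'}[T_C^{1/2}]\le\frac{2}{\pi^{1/2}}(\mathbb{E}_{x'}[T_C])^{1/2}$.

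\emph{Your route.} You avoid the series altogether. You sandwich $v_{(0,L)}(x_1;t)$ between $\mathrm{erf}(x_1/(2t^{1/2}))-x_1/L$ and $\mathrm{erf}(x_1/(2t^{1/2}))$. The upper bound is domain monotonicity. The lower bound is the harmonic-measure estimate $\Pa_{x_1}[T_L\le t<T_0]\le\Pa_{x_1}[T_L<T_0]=x_1/L$, with $T_0,T_L$ the hitting times of $0$ and $L$. You then let $x_1\downarrow0$ by monotone convergence, since $\mathrm{erf}(s)/s\uparrow 2\pi^{-1/2}$ as $s\downarrow0$. Finally, you replace Jensen by the bathtub principle, which uses only $0\le v_C\le1$ and $\int_0^\infty v_C(x';t)\,\dd t=w_C(x')$.

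\emph{What each buys.} Your route is more elementary and gives the sharper correction $w_C(x')/L$ in place of $4w_C(x')/L$. That would relax \eqref{e27} in Theorem \ref{the1}(i) to $1+\frac{a_{-}}{2L}<I_d\sum_{i=1}^d\frac{a_{-}^2}{a_i^2}$. The paper's route delivers the exact representation \eqref{e34}, which is reused in Section \ref{sec3} to place the maximum over the base at its centre. In your framework the same representation follows by dominated convergence, since $0\le x_1^{-1}v_{(0,L)}(x_1;t)\le(\pi t)^{-1/2}$.

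\emph{Minor remarks.} First, $v_{(0,\infty)}-v_{(0,L)}$ is exactly $\Pa_{x_1}[T_L\le t<T_0]$, the probability of a sub-event of the one you describe; your bound is unaffected. Second, every integrability statement you need follows from $0\le v\le1$ and $w_C(x')<\infty$. So neither exponential decay of $v_C$ nor any ``cut-off'' is required, and the detour through splitting at $T$ and the fallback to images can be deleted.
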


\begin{proof}
The one-dimensional heat kernel is given by its $L^2$- eigenfunction expansion
\begin{equation*}
p_{(0,L)}(x_1,y_1;t)=\frac{2}{L}\sum_{k=1}^{\infty}e^{-t\pi^2k^2/L^2}\sin(\pi kx_1/L)\sin(\pi ky_1/L).
\end{equation*}
Hence
\begin{align}\label{e31}
v_{(0,L)}(x_1;t)&=\frac{2}{L}\sum_{k=1}^{\infty}e^{-t\pi^2k^2/L^2}\sin(\pi kx_1/L)\int_{(0,L)}\dd y_1\,\sin(\pi ky_1/L)\nonumber\\
&=\frac{4}{\pi}\sum_{k=1,3,...}^{\infty}k^{-1}e^{-t\pi^2k^2/L^2}\sin(\pi kx_1/L).
\end{align}
Since
\begin{equation*}
v_C(x';t)=\int_C \dd y'\,p_C(x',y';t),
\end{equation*}
we have by \eqref{e31}
\begin{align*}
w_{(0,L)\times C}(x_1,x')&=\int_{(0,\infty)}\dd t\,v_{(0,L)}(x_1;t)v_C(x';t)\nonumber\\&
=\int_{(0,\infty)}\dd t\frac{4}{\pi}\sum_{k=1,3,...}^{\infty}k^{-1}e^{-t\pi^2k^2/L^2}\sin(\pi kx_1/L)v_C(x';t).
\end{align*}
Using Lebesgue's dominated convergence theorem we differentiate with respect to $x_1$ under the integral, and find that
\begin{equation}\label{e34}
\Big|\frac{\partial w_{(0,L)\times C}}{\partial x_1}\Big|(0,x')=\int_{(0,\infty)}\dd t\,\frac{4}{L}\sum_{k=1,3,...}^{\infty}e^{-t\pi^2k^2/L^2}v_C(x';t),\,\,x'\in C.
\end{equation}
We have that
\begin{align}\label{e35}
\frac{4}{L}\sum_{k=1,3,...}^{\infty}e^{-t\pi^2k^2/L^2}&\ge \frac{4}{L}\sum_{k=2,4,...}^{\infty}e^{-t\pi^2k^2/L^2}\nonumber\\&
=\frac{4}{L}\sum_{k=1,2,...}^{\infty}e^{-4t\pi^2k^2/L^2}\nonumber\\&
\ge \frac{4}{L}\Big(\int_{(0,\infty)}\dd k\,e^{-4t\pi^2k^2/L^2}-1\Big)\nonumber\\&
=\frac{1}{(\pi t)^{1/2}}-\frac{4}{L}.
\end{align}
By \eqref{e34} and \eqref{e35}
\begin{align*}
\Big|\frac{\partial w_{(0,L)\times C}}{\partial x_1}\Big|(0,x')&\ge \int_{(0,\infty)}\dd t\,\Big(\frac{1}{(\pi t)^{1/2}}-\frac{4}{L}\Big)v_C(x';t)\nonumber\\&
=\int_{(0,\infty)}\frac{\dd t}{(\pi t)^{1/2}}v_C(x';t)-\frac{4}{L}w_C(x'),\,x'\in C.
\end{align*}
This proves the lower bound in \eqref{e29}.

To prove the upper bounds in \eqref{e29} we have that
\begin{align}\label{e37}
\frac{4}{L}\sum_{k=1,3,...}^{\infty}e^{-t\pi^2k^2/L^2}&=\frac4L\sum_{k\in \N}\Big(e^{-t\pi^2k^2/L^2}-e^{-4t\pi^2k^2/L^2}\Big)\nonumber\\&
=\frac2L\sum_{k\in \Z}\Big(e^{-t\pi^2k^2/L^2}-e^{-4t\pi^2k^2/L^2}\Big)\nonumber\\&
=\frac{2}{(\pi t)^{1/2}}\sum_{k\in \Z}e^{-k^2L^2/t}-\frac{1}{(\pi t)^{1/2}}\sum_{k\in \Z}e^{-k^2L^2/(4t)}\nonumber\\&
=\frac{1}{(\pi t)^{1/2}}+\frac{2}{(\pi t)^{1/2}}\Big(\sum_{k\in \N}e^{-k^2L^2/t}-\sum_{k=1,3,...}e^{-k^2L^2/(4t)}\Big)\nonumber\\&
\le\frac{1}{(\pi t)^{1/2}}+\frac{2}{(\pi t)^{1/2}}\Big(\sum_{k\in \N}e^{-k^2L^2/t}-\sum_{k=2,4,...}e^{-k^2L^2/(4t)}\Big)\nonumber\\&
=\frac{1}{(\pi t)^{1/2}},
\end{align}
where we have used \cite[p.444]{ECT}
\begin{equation*}
\sum_{k\in \Z}e^{-k^2x^2}=\frac{\pi^{1/2}}{x}\sum_{k\in\Z}e^{-k^2\pi^2/x^2},\,x>0.
\end{equation*}
By Lemma \ref{lem0} with $\Om_1=(0,L)\times C,\,\Om_2=(0,\infty)\times C,\,x_0=(0,x')$, \eqref{e34} and \eqref{e37} we have that
\begin{align}\label{e39}
\Big|\frac{\partial w_{(0,L)\times C}}{\partial x_1}\Big|(0,x')&\le \Big|\frac{\partial w_{(0,\infty)\times C}}{\partial x_1}\Big|(0,x')\nonumber\\&
\le\int _{(0,\infty)}\frac{\dd t}{(\pi t)^{1/2}}v_C(x';t)\nonumber\\&
=\int _{(0,\infty)}\frac{\dd t}{(\pi t)^{1/2}}\Pa_{x'}(T_C>t)\nonumber\\&
=\int _{(0,\infty)}\frac{\dd t}{(\pi t)^{1/2}}\mathbb{E}_{x'}\Big[1_{\{T_C>t\}}\Big]\nonumber\\&
=\mathbb{E}_{x'}\Big[\int _{(0,\infty)}\frac{\dd t}{(\pi t)^{1/2}}1_{\{T_C>t\}}\Big]\nonumber\\&
=\frac{2}{\pi^{1/2}}\mathbb{E}_{x'}[T^{1/2}_C]\nonumber\\&
\le \frac{2}{\pi^{1/2}}\Big(\mathbb{E}_{x'}[T_C]\Big)^{1/2}\nonumber\\&=\frac{2}{\pi^{1/2}}\big(w_C(x')\big)^{1/2},\,x'\in C,
\end{align}
where we have used \eqref{e2} in the second line and where we have written $\Pa_{x'}$ as an expectation $\mathbb{E}_{x'}$ of an indicator function in the third line.
In the fourth line we have used Tonelli's theorem, and in the sixth line we have used Cauchy Schwarz's inequality. The right-hand side of \eqref{e39} is finite by the hypothesis on $C$. This completes the proof of \eqref{e29}.
\end{proof}

\section{Proof of Theorem \ref{the1}(i) \label{sec3}}

In this section we will prove Theorem \ref{the1}(i).
By monotonicity  of $\Om\mapsto w_{\Om}$, and the fact that $E_a\supset a_{-} B_d$ we have by the lower bound in \eqref{e29} that
\begin{equation}\label{e78}
|\nabla w_{(0,L)\times E_a}|(0,0)\ge \int_{(0,\infty)}\frac{\dd t}{(\pi t)^{1/2}}v_{a_{-} B_d}(0;t)-\frac{4w_{E_a}(0)}{L}.
\end{equation}
By scaling of the heat equation
\begin{equation}\label{e79}
v_{a_{-} B_d}(0;t)=v_{ B_d}(0;t/a^2_{-}).
\end{equation}
Moreover,
\begin{equation}\label{e80}
w_{E_a}(0)=\frac12\Big(\sum_{i=1}^{d} \Big(\frac{1}{a_i}\Big)^2\Big)^{-1}.
\end{equation}
Combining \eqref{e78}-\eqref{e80} gives that
\begin{equation*}
|\nabla w_{(0,L)\times E_a}|(0,0)\ge a_{-} I_d-\frac{2}{L}\Big(\sum_{i=1}^{d} \Big(\frac{1}{a_i}\Big)^2\Big)^{-1}.
\end{equation*}
By Lemma \ref{lem0} with $\Om_1=(0,L)\times E_a,\,\Om_2=\R \times E_a,\,x_0=(x_1,x')$ and $0<x_1<L,$ we have that
\begin{equation*}
|\nabla w_{(0,L)\times E_a}|(x_1,x')\le \frac{1}{a_{-}}\Big(\sum_{i=1}^{d} \Big(\frac{1}{a_i}\Big)^2\Big)^{-1}.
\end{equation*}
We conclude that if
\begin{equation*}
\frac{1}{a_{-}}\Big(\sum_{i=1}^{d} \Big(\frac{1}{a_i}\Big)^2\Big)^{-1}<a_{-} I_d-\frac{2}{L}\Big(\sum_{i=1}^{d} \Big(\frac{1}{a_i}\Big)^2\Big)^{-1},
\end{equation*}
then $|\nabla w_{(0,L)\times E_a}|$ has maxima  at $x_1=0$ and $x_1=L$. This is equivalent to \eqref{e27}.

\smallskip

It remains to show that the maxima are located at $(0,0)$ and $(L,0)$ respectively. For this we use \eqref{e34}.
It was shown in  \cite[Proposition 4.1(i)]{BMS} that if $C$ is convex, for fixed $t$, $v_C(x;t)$ is maximal for some $x_t$ in the heart of $C$.  If $C$ has $d$ planes of symmetry then the heart coincides with the centre of $C$ \cite[Theorem 2.4(i)]{BM}. In that case the location of the maximum is independent of $t$.
In case $C=E_a$ we conclude $v_{E_a}(x';t)\le v_{E_a}(0;t),\,t>0$. Hence the left-hand side of \eqref{e34} has a maximum at $(0,0)$. By symmetry $(L,0)$ is the second maximum. This proves \eqref{e27a}.
\hfill$\square$

\medskip

\section{Proof of Theorem \ref{the1}(ii) \label{sec4}}

To prove Theorem \ref{the1}(ii) we use the upper bound in \eqref{e29} for the length of the gradient of the torsion function for $(0,L)\times E_a$ at its base.
Since $E_a$ is contained in the region bounded by two parallel planes at distance $2a_{-}$ we use monotonicity to conclude that
\begin{align}\label{e85}
|\nabla w_{(0,L)\times E_a}|(0,x')&\le \int_{(0,\infty)}\frac{\dd t}{(\pi t)^{1/2}}v_{(-a_{-},a_{-})}(x';t)\nonumber\\&
\le  \int_{(0,\infty)}\frac{\dd t}{(\pi t)^{1/2}}v_{(-a_{-},a_{-})}(0;t)\nonumber\\&
=a_{-}\int_{(0,\infty)}\frac{\dd t}{(\pi t)^{1/2}}v_{(-1,1)}(0;t)\nonumber\\&
=a_{-}I_1\nonumber\\&
=\frac{8Ga_{-}}{\pi^2},
\end{align}
by Proposition \ref{prop1}(ii).

\smallskip

Next we obtain a lower bound for gradient of the torsion function for $x'\in \partial E_a$ with $x_1=\frac{L}{2}$.
Since the Dirichlet heat kernel for $(0,L)\times E_a$ factorises we have with $x=(x_1,x'),\,y=(y_1,y')$
\begin{equation*}
p_{(0,L)\times E_a}(x,y;t)=p_{E_a}(x',y';t)p_{(0,L)}(x_1,y_1;t).
\end{equation*}
Integrating with respect to $y$ over $(0,L)\times E_a$ gives that
\begin{align*}
v_{(0,L)\times E_a}(x;t)&=v_{E_a}(x';t)v_{(0,L)}(x_1;t)\nonumber\\&
=v_{E_a}(x';t)-\Big(1-v_{(0,L)}(x_1;t)\Big)v_{E_a}(x';t).
\end{align*}
A further integration with respect to $t$ over $[0,\infty)$ yields
\begin{equation}\label{e89}
w_{(0,L)\times E_a}(x)=w_{E_a}(x')-\int_0^\infty \dd t\,\Big(1-v_{(0,L)}(x_1;t)\Big)v_{E_a}(x';t).
\end{equation}
Since the left-hand side of \eqref{e89} is symmetric in $x$ with respect to $x_1=\frac{L}{2}$, and the first term in the right-hand side of \eqref{e89} is independent of $x_1$, the second term in the right-hand side of
 \eqref{e89} is symmetric in $x$ with respect to $x_1=\frac{L}{2}$. Taking gradients of \eqref{e89} gives that
\begin{equation*}
|\nabla w_{(0,L)\times E_a}|(L/2,x')\ge |\nabla w_{E_a}(x')|-\Big|\nabla \int_0^\infty \dd t\,\Big(1-v_{(0,L)}(x_1;t)\Big)v_{E_a}(x';t)\Big|
\end{equation*}
We infer that for any point $x$ with $x_1=\frac{L}{2}$ on the lateral boundary of the cylinder
\begin{align}\label{e91}
|\nabla w_{(0,L)\times E_a}|(L/2,x')&\ge |\nabla w_{E_a}|(x')-\Big|\nabla \int_0^\infty \dd t\,\Big(1-v_{(0,L)}(L/2;t)\Big)v_{E_a}(x';t)\Big|\nonumber\\&
=|\nabla w_{E_a}|(x')- \Big|\int_0^\infty \dd t\,\Big(1-v_{(0,L)}(L/2;t)\Big)\nabla v_{E_a}(x';t)\Big|\nonumber\\&
\ge |\nabla w_{E_a}|(x')- \int_0^\infty \dd t\,\Big(1-v_{(0,L)}(L/2;t)\Big)|\nabla v_{E_a}|(x';t).
\end{align}

We choose $x'=\tilde{x}'$ such that $|\nabla w_{E_a}|(x')$ is maximal. So
by \eqref{e18} and \eqref{e91}
\begin{align}\label{e93}
|\nabla w_{(0,L)\times E_a}|&(L/2,\tilde{x}')\nonumber\\&\ge\frac{1}{a_{-}}\Big(\sum_{i=1}^d\frac{1}{a_i^2}\Big)^{-1}-\int_0^\infty \dd t\,\Big(1-v_{(0,L)}(L/2;t)\Big)|\nabla v_{E_a}|(\tilde{x}';t).
\end{align}

Since $T(\Om)$ in \eqref{e1} is monotone under set inclusions  $v_{\Om}(\cdot,t)$ is monotone under set inclusions by \eqref{e2}. Hence we have monotonicity of $\nabla v_{\Om}(\cdot,t)$ under set inclusion as in Lemma \ref{lem0}. Since  $E_a$ is contained in the region bounded by two parallel hyper planes at distance $2a_{-}$ tangent at $\tilde{x}'$ and $-\tilde{x}'$ respectively,
\begin{align}\label{e94}
|\nabla v_{E_a}|(\tilde{x}';t)&\le\Big|\frac{\partial }{\partial z}
\frac{4}{\pi}\sum_{n=0}^\infty \frac{(-1)^n}{2n+1}e^{-(2n+1)^2\pi^2t/(4a_{-}^2)}\cos\frac{(2n+1)\pi z_1}{2a_{-}}\Big|_{z_1=-a_{-}}\nonumber\\&
=\frac{2}{a_{-}}\sum_{n=0}^\infty e^{-(2n+1)^2 \pi^2t/(4a_{-}^2)}\nonumber\\&
\le \frac{2}{a_{-}}e^{-\pi^2t/(8a_{-}^2}\sum_{n=0}^\infty e^{-(2n+1)^2\pi^2t/(8a_{-}^2)}\nonumber\\&
\le \Big(\frac{2}{\pi t}\Big)^{1/2}e^{-\pi^2t/(8a_{-}^2)},
\end{align}
where we have used \eqref{e37} with $L=2^{3/2}a_{-}$ in the last inequality.
By \cite[Lemma 4]{MvdBSri} we have that
\begin{equation}\label{e95}
1-v_{(0,L)}(L/2;t)\le 2^{3/2}e^{-L^2/(32t)}.
\end{equation}
By \eqref{e93}--\eqref{e95}
\begin{align}\label{e96}
|\nabla w_{(0,L)\times E_a}|(L/2,\tilde{x}')&\ge\frac{1}{a_{-}}\Big(\sum_{i=1}^d\frac{1}{a_i^2}\Big)^{-1}-\int_0^{\infty}\frac{4\dd t}{(\pi t)^{1/2}}e^{-L^2/(32t)-\pi^2t/(8a_{-}^2)}\nonumber\\&
=\frac{1}{a_{-}}\Big(\sum_{i=1}^d\frac{1}{a_i^2}\Big)^{-1}-\frac{2^{7/2}a_{-}}{\pi}e^{-\pi L/(8a_{-})},
\end{align}
where we have used the change of variable $t=\theta^2$, and the integral identity \cite[3.325]{GR} in the last equality. If the right-hand side of \eqref{e96} is strictly larger than the right-hand side of \eqref{e85}, that is if \eqref{e28} holds, then the maxima of the length of the gradient are located on the lateral of the cylinder.
Requirement \eqref{e28} follows by \eqref{e85} and \eqref{e96}. This implies \eqref{e28a}.
\hfill$\square$

It is an open problem to show that if \eqref{e28} is satisfied, then the only maxima of $|\nabla w_{(0,L)\times E_a}|$ are the points $(\frac{L}{2},\pm\tilde{x}').$

\section{Proof of Proposition \ref{prop1} \label{sec5}}
{\it Proof of Proposition \ref{prop1}\textup{(i)}.}
First note that the integral converges by \eqref{e39}.
Furthermore since $w_{B_d}(x')\le w_{B_d}(0)=\frac{1}{2d}$, \eqref{e39} gives \eqref{e22}.

\medskip

\noindent{\it Proof of Proposition \ref{prop1}\textup{(ii)},\,$d=1$}.
By \eqref{e31} we obtain that
$$ v_{B_1}(0;t)=v_{(0,2)}(1;t)=\frac{4}{\pi}\sum_{k=1,3,...}^{\infty}k^{-1}(-1)^{(k-1)/2}e^{-t\pi^2k^2/4}.$$
Multiplying this  equality with $(\pi t)^{-1/2}$, and integrating with respect to $t$ over $(0,\infty)$ gives \eqref{e22a} by definition \eqref{e28b}.

\medskip

\noindent{\it Proof of Proposition \ref{prop1}\textup{(iii)},\,$d\ge 4$}.
Let $\Om$ have strictly positive and finite measure.  Denote the spectrum of the Dirichlet Laplacian acting in $L^2(\Om)$ by $\lambda_{1,\Om}\le \lambda_{2,\Om}\le ...$
accumulating at infinity only. Let $\{\varphi_{1,\Om},\varphi_{2,\Om},...\}$ be a corresponding orthonormal basis of eigenfunctions. We choose $\varphi_{1,\Om}\ge 0$, and recall that
$\varphi_{1,\Om}\in L^{\infty}(\Om)$. See for example \cite[Theorem 1.4]{MvdBEB}. Furthermore the Dirichlet heat kernel
has an $L^2(\Om)$ eigenfunction expansion given by
\begin{equation*}
p_{\Om}(x,y;t)=\sum_{j\in \N}e^{-t\lambda_{j,\Om}}\varphi_{j,\Om}(x)\varphi_{j,\Om}(y).
\end{equation*}
By the orthonormality of the eigenfunctions and Fubini's theorem
\begin{align}\label{e41}
v_{\Om}(x;t)=\int_{\Om}\dd y\,p_{\Om}(x,y;t)&=\int_{\Om}\dd y\,\sum_{j\in \N}e^{-t\lambda_{j,\Om}}\varphi_{j,\Om}(x)\varphi_{j,\Om}(y)\nonumber\\&\ge
\int_{\Om}\dd y\,\sum_{j\in \N}e^{-t\lambda_{j,\Om}}\varphi_{j,\Om}(x)\varphi_{j,\Om}(y)\frac{\varphi_{1,\Om}(y)}{\|\varphi_{1,\Om}\|_{\infty}}\nonumber\\&
=e^{-t\lambda_{1,\Om}}\frac{\varphi_{1,\Om}(x)}{\|\varphi_{1,\Om}\|_{\infty}}.
\end{align}
Multiplying both sides of \eqref{e41} with $(\pi t)^{-1/2}$ and integration with respect to $t$ over $(0,\infty)$ gives that
\begin{align}\label{e42}
\int_{(0,\infty)}\frac{\dd t}{(\pi t)^{1/2}}v_{\Om}(x;t)&\ge\int_{(0,\infty)}\frac{\dd t}{(\pi t)^{1/2}}e^{-t\lambda_{1,\Om}}\frac{\varphi_{1,\Om}(x)}{\|\varphi_{1,\Om}\|_{\infty}}\nonumber\\&
=\frac{1}{(\lambda_{1,\Om})^{1/2}}\frac{\varphi_{1,\Om}(x)}{\|\varphi_{1,\Om}\|_{\infty}}.
\end{align}
Substitution of $\Om=B_d$ and $x=0$ in \eqref{e42} yields
\begin{equation}\label{e43}
I_d\ge \frac{1}{(\lambda_{1,B_d})^{1/2}}=\frac{1}{j_{\frac{d}{2}-1,1}}.
\end{equation}
It was shown  in \cite[(2)]{Ch} that
\begin{equation*}
j_{\frac{d}{2}-1,1}<\Big(\frac{d}{2}\Big)^{1/2}\Big(\Big(\frac{d}{2}+1\Big)^{1/2}+1\Big).
\end{equation*}
This gives by \eqref{e43} that
\begin{equation*}
I_d>\Big(\frac{d}{2}\Big)^{-1/2}\Big(\Big(\frac{d}{2}+1\Big)^{1/2}+1\Big)^{-1}.
\end{equation*}
This proves \eqref{e25}.

\medskip

To prove the remaining assertions of Proposition \ref{prop1} we need the following.
\begin{lemma}\label{lem2}
Let $\Om=B_d$, and denote the solution of \eqref{e3} by $v_{B_d}$. If $d=1,2,3,...$, then
\begin{equation}\label{e46}
v_{B_d}(x;t)=\sum_{k=1}^{\infty}\frac{2r^{-\nu}}{j_{\nu,k}J_{\nu+1}(j_{\nu,k})}J_{\nu}(j_{\nu,k}r)e^{-j_{\nu,k}^2t},
\end{equation}
where $r=|x|$, and
\begin{equation*}
\nu=\frac{d}{2}-1.
\end{equation*}
\end{lemma}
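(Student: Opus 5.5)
We expand $v_{B_d}$ in Dirichlet eigenfunctions of the ball, keeping only the radial ones, since the initial datum $1$ is radial. The heat kernel expansion from the proof of \eqref{e25} gives
\[
v_{B_d}(x;t)=\sum_j e^{-t\lambda_{j,B_d}}\varphi_{j,B_d}(x)\int_{B_d}\varphi_{j,B_d}.
\]
Every non-radial eigenfunction has a spherical harmonic factor of degree $\ge1$, so its integral over $B_d$ vanishes. Only the radial eigenfunctions survive. These have the form $\varphi_k(x)=c_k r^{-\nu}J_\nu(j_{\nu,k}r)$, with $\nu=\frac d2-1$ and eigenvalue $j_{\nu,k}^2$. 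This follows because the radial Laplacian $u''+\frac{d-1}{r}u'$, applied to $u=r^{-\nu}f(r)$, reduces to Bessel's equation of order $\nu$ for $f(\mu r)$. Regularity at the origin selects $J_\nu$ rather than $Y_\nu$, and the Dirichlet condition at $r=1$ forces $\mu=j_{\nu,k}$. The radial eigenfunctions are simple and complete in the radial subspace of $L^2(B_d)$, by Sturm–Liouville theory on $(0,1)$ with weight $r^{d-1}$. For $d=1$ we have $\nu=-\frac12$, and the formula reduces to the cosine series already used in the proof of \eqref{e22a}; this serves as a consistency check.

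Next we compute the coefficient $\varphi_k(x)\int\varphi_k$ without normalising separately. Write $\omega_d=|\partial B_d|$. With $\psi_k(r)=r^{-\nu}J_\nu(j_{\nu,k}r)$, the term equals $\psi_k(r)\,\frac{\int_{B_d}\psi_k}{\int_{B_d}\psi_k^2}$. For the numerator we use $r^{d-1}r^{-\nu}=r^{\nu+1}$ and $\frac{d}{ds}(s^{\nu+1}J_{\nu+1}(s))=s^{\nu+1}J_\nu(s)$, which give
\[
\int_{B_d}\psi_k=\omega_d\int_0^1 r^{\nu+1}J_\nu(j_{\nu,k}r)\,\dd r=\omega_d\,\frac{J_{\nu+1}(j_{\nu,k})}{j_{\nu,k}}.
\]
For the denominator we use the Lommel integral $\int_0^1 rJ_\nu(j_{\nu,k}r)^2\,\dd r=\frac12 J_{\nu+1}(j_{\nu,k})^2$, valid since $J_\nu(j_{\nu,k})=0$ and $J_\nu'(j_{\nu,k})=-J_{\nu+1}(j_{\nu,k})$. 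This gives $\int_{B_d}\psi_k^2=\frac{\omega_d}{2}J_{\nu+1}(j_{\nu,k})^2$. The ratio is $\frac{2}{j_{\nu,k}J_{\nu+1}(j_{\nu,k})}$, which yields exactly \eqref{e46}.

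The main obstacle is justification rather than computation. We must check three things. First, that the non-radial modes indeed drop out, which holds by orthogonality of spherical harmonics to constants. Second, that the Sturm–Liouville completeness of the radial Bessel functions holds for all $\nu\ge-\frac12$. Third, that the series converges pointwise for each $t>0$, including at $r=0$, where $r^{-\nu}J_\nu(j_{\nu,k}r)\to j_{\nu,k}^\nu/(2^\nu\Gamma(\nu+1))$. Pointwise convergence follows from the bound $|r^{-\nu}J_\nu(j_{\nu,k} r)|\le C j_{\nu,k}^{\nu}$, together with the asymptotics $j_{\nu,k}\sim\pi k$, $|J_{\nu+1}(j_{\nu,k})|\sim(\pi j_{\nu,k}/2)^{-1/2}$, and the factor $e^{-j_{\nu,k}^2t}$. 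With this convergence, the $L^2$ identity becomes a pointwise identity by continuity of $v_{B_d}(\cdot;t)$.
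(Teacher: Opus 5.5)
Your proposal is correct and follows essentially the paper's route. The paper poses the radial problem directly in $L^2((0,1);r^{d-1})$, expands in $U_k(r)=r^{-\nu}J_\nu(j_{\nu,k}r)$, and computes $c_k$ as the same ratio of $\int_0^1 r^{\nu+1}J_\nu(j_{\nu,k}r)\,\dd r$ to the Lommel integral, using $J_\nu'(j_{\nu,k})=-J_{\nu+1}(j_{\nu,k})$. Your differences are cosmetic: you reach the radial expansion by discarding the non-radial modes of the full Dirichlet eigenbasis, and you supply the convergence and pointwise-identification details that the paper leaves implicit.
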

\begin{proof}
Since both initial data and boundary conditions have radial symmetry we only consider the radial solutions and rewrite \eqref{e3} for $\Om=B_d$ with $v=v(r;t)$ as
\begin{equation}\label{e48}
\begin{cases}
\frac{\partial v}{\partial t}=\Delta_r v\qquad &r\in [0,1), t>0,\\
v(1;t)=0, v(0;t)<\infty,\qquad &t>0,\\
v(r;0)=1,\qquad  &r\in [0,1),
\end{cases}
\end{equation}
where
\begin{equation*}
\Delta_r=\frac{1}{r^{d-1}}\frac{\partial}{\partial r} r^{d-1} \frac{\partial}{\partial r}
\end{equation*}
acts in $L^2\left((0,1); r^{d-1}\right)$.

We note that the eigenvalues and (not yet normalised but pairwise orthogonal) eigenfunctions of $-\Delta_r$ subject to the Dirichlet condition at $r=1$ and the regularity condition at $r=0$ are
\begin{equation*}
\lambda_k = j_{\nu, k}^2,\qquad U_k(r) = r^{-\nu}J_\nu\left( j_{\nu, k} r\right), \qquad k\in\mathbb{N}.
\end{equation*}

We seek the solution of \eqref{e48} by expanding in the eigenfunctions of $-\Delta_r$:
\begin{equation}\label{e51}
v(r;t)= \sum_{n=1}^\infty c_n U_n(r) \er^{-\lambda_n t}.
\end{equation}
Taking the initial condition $v(r;0)=1$, and multiplying it by $U_k$ in $L^2\left((0,1); r^{2\nu+1}\right)$, we get
\begin{equation}\label{e52}
c_k=\frac{\int_0^1 r^{2\nu+1} U_k(r)\,\dd r}{\int_0^1 r^{2\nu+1} (U_k(r))^2\,\dd r}=\frac{\int_0^1 r^{\nu+1}J_\nu\left( j_{\nu, k} r\right)\,\dd r}{\int_0^1 r J_\nu^2\left( j_{\nu, k} r\right)\,\dd r}\\
=\frac{\frac{1}{j_{\nu,k}}J_{\nu+1}(j_{\nu,k})}{\frac{1}{2}\left(J'_\nu(j_{\nu,k})\right)^2},
\end{equation}
where the integral in the numerator is evaluated by
\cite[11.3.20]{AS},
and the one in the denominator by \cite[11.4.5]{AS}. Using the recurrence relation \cite[9.1.27]{AS}
\begin{equation*}
J'_\nu(z) = -J_{\nu+1}(z)+\frac{\nu}{z}J_\nu(z)
\end{equation*}
evaluated at $z=j_{\nu, k}$, we deduce that
\begin{equation}\label{e54}
c_k=\frac{2}{j_{\nu,k} J_{\nu+1}(j_{\nu,k})}= -\frac{2}{j_{\nu,k} J'_{\nu}(j_{\nu,k})}.
\end{equation}
Formula \eqref{e46} follows from \eqref{e51}--\eqref{e54}.
\end{proof}

\smallskip

We continue with the proof of Proposition \ref{prop1}\textup{(iii)} for $d=3$. In that case $\nu=\frac12$, and by \cite[10.1.1,10.1.11]{AS} we find that
$j_{\frac12,k}=\pi k$. Evaluating the Bessel functions $J_{\frac12}$ and $J_{\frac32}$  gives
\begin{equation}\label{e55}
v_{B_3}(x;t)=2\sum_{k\in \N}(-1)^{k-1}\frac{\sin(k\pi r)}{{k\pi r}}e^{-k^2\pi^2 t},\, t>0.
\end{equation}
This jibes with \cite[p.233(4)]{CJ}, where the authors considered the problem with boundary condition $1$ on $\partial B_3$, and initial datum $0$ on $B_3$.
Replacing $v$ there by $1-v$ gives \eqref{e55}.

\smallskip

It is straightforward to verify that for $d=3$,
\begin{align}\label{e56}
\lim_{r\downarrow 0}v_{B_3}(x;t)&=2\sum_{k\in \N}(-1)^{k-1}e^{-k^2\pi^2 t}\nonumber\\&
=2\sum_{k\in \N}\Big(e^{-(2k-1)^2\pi^2 t}-e^{-(2k)^2\pi^2 t}\Big),\,t>0.
\end{align}
The summand of the right-hand side of \eqref{e56} is non-negative. By Tonelli's theorem and by \eqref{e21}, \eqref{e55}, \eqref{e56}
we have that
\begin{align*}
I_3&=\int_{(0,\infty)}\dd t\,\frac{1}{(\pi t)^{1/2}}\sum_{k\in \N}\Big(e^{-(2k-1)^2t}-e^{-(2k)^2t}\Big)\nonumber\\&
=2\sum_{k\in \N}\int_{(0,\infty)}\dd t\,\frac{1}{(\pi t)^{1/2}}\Big(e^{-(2k-1)^2\pi^2 t}-e^{-(2k)^2\pi^2 t}\Big)\nonumber\\&
=\frac{2}{\pi}\sum_{k\in \N}\Big((2k-1)^{-1}-(2k)^{-1}\Big)\nonumber\\&
=\frac{2\log 2}{\pi},
\end{align*}
where we  have used \cite[6.1.1]{AS} with $z=\frac12$, and
\begin{align*}
\sum_{k\in \N}\Big((2k-1)^{-1}-(2k)^{-1}\Big)&=\lim_{n\rightarrow \infty}\sum_{k=1}^n\Big((2k-1)^{-1}-(2k)^{-1}\Big)\nonumber\\&
=\lim_{n\rightarrow \infty}\sum_{k=1}^{n}\frac{(-1)^{k-1}}{k}=\log 2,
\end{align*}
in the last equality.
This proves \eqref{e24}.

\smallskip

We next consider the case $d=2$. Then $\nu=0$. This gives by \eqref{e46} that
\begin{equation*}
v_{B_2}(x;t)=2\sum_{k=1}^{\infty}\frac{J_0(rj_{0,k})}{j_{0,k}J_1(j_{0,k})}e^{-j_{0,k}^2t},\, t>0.
\end{equation*}
This jibes with \cite[p.199 (5)]{CJ}.
By \cite[9.1.7]{AS} with $\nu=0$ we find that
\begin{equation*}
\lim_{r\downarrow 0}v_{B_2}(x;t)=2\sum_{k=1}^{\infty}\frac{1}{j_{0,k}J_1(j_{0,k})}e^{-j_{0,k}^2t},\, t>0.
\end{equation*}
By \cite[9.1.27]{AS} $J_1(j_{0,k})=-J_0'(j_{0,k})$ so that
\begin{equation*}
v_{B_2}(0;t)=2\sum_{k=1}^{\infty}\frac{-1}{j_{0,k}J'_0(j_{0,k})}e^{-j_{0,k}^2t},\, t>0.
\end{equation*}
We first show that we may change the order of integration with respect to $t$ and summation over $k$. Define the partial sums
\begin{equation}\label{e62}
f_m(t)=2\sum_{k=1}^{m}\frac{-1}{j_{0,k}J'_0(j_{0,k})}\frac{e^{-j_{0,k}^2t}}{(\pi t)^{1/2}},\, t>0,\,m\in \N.
\end{equation}
In order to find an upper bound on the absolute value of the summand we require a lower bound for $|J'_0(j_{0,k})|$.
We write the Bessel function in terms of Bessel modulus and phase functions \cite[9.2.19]{AS},
\begin{equation*}
J_\nu(z)=M_0(z) \cos\theta_0(z).
\end{equation*}
Then
\begin{equation*}
J'_0(z)=M'_0(z) \cos\theta_0(z) - M_0(z) \theta'_0(z)\sin\theta_0(z),
\end{equation*}
and for $z\in Z_0:=\left\{j_{0,k}: k\in\mathbb{N}\right\}$ we have $\cos\theta_0(z) = 0$ and $|\sin\theta_0(z)|=1$.
Moreover, by \cite[9.2.21]{AS}
\begin{equation*}
M_0^2(x)\theta_0'(x)=\frac{2}{\pi x},\,x>0.
\end{equation*}
Hence $\theta_0'(j_{0,k})>0$, and
\begin{equation}\label{e66}
|J'_0(j_{0,k})|= M_0(j_{0,k}) \theta'_0(j_{0,k})=\frac{2}{\pi j_{0,k}M_0(j_{0,k})}.
\end{equation}
By \eqref{e62} and \eqref{e66},
\begin{align}\label{e67}
|f_m(t)|&\le \frac{4}{\pi}\sum_{k=1}^{m}M_0(j_{0,k})\frac{e^{-j_{0,k}^2t}}{(\pi t)^{1/2}}\nonumber\\&
\le \frac{4}{\pi}\sum_{k=1}^{\infty}M_0(j_{0,k})\frac{e^{-j_{0,k}^2t}}{(\pi t)^{1/2}}.
\end{align}
Since the right-hand side of \eqref{e67} is positive we have by Tonelli's theorem
\begin{align}\label{e68}
\int_{(0,\infty)}\dd t\,\sum_{k=1}^{\infty}M_0(j_{0,k})\frac{e^{-j_{0,k}^2t}}{(\pi t)^{1/2}}&\le \sum_{k=1}^{\infty}\int_{(0,\infty)}\dd t\,M_0(j_{0,k})\frac{e^{-j_{0,k}^2t}}{(\pi t)^{1/2}}\nonumber\\&
=\sum_{k=1}^{\infty}\frac{M_0(j_{0,k})}{j_{0,k}}.
\end{align}
We make the following observations: (i) $x\rightarrow M_0(x),\,x>0$ is decreasing by\cite[13.74]{Wat}, (ii)
\begin{equation*}
M_0(x)=\Big(\frac{2}{\pi x}\Big)^{1/2}(1+o(1)),\,x\rightarrow \infty,
\end{equation*}
by \cite[p.447(1)]{Wat},
and (iii)
\begin{equation*}
j_{0,k}=k\pi(1+o(1)),\, k\rightarrow\infty,
\end{equation*}
by \cite[p.504]{Wat}.  We conclude by (i)--(iii) that the series in the right-hand side of \eqref{e68} converges.
We conclude that the series $\sum_{k=1}^{\infty}\frac{-1}{j_{0,k}J'_0(j_{0,k})}e^{-j_{0,k}^2t}$ converges almost everywhere to a function $f$, and that,
by Lebesgue's dominated convergence theorem,
\begin{align*}
\int_{(0,\infty)}\dd t\, \sum_{k=1}^{\infty}\frac{-2}{j_{0,k}J'_0(j_{0,k})}\frac{e^{-j_{0,k}^2t}}{(\pi t)^{1/2}}=2\sum_{k=1}^{\infty}\frac{-1}{j^2_{0,k}J'_0(j_{0,k})}.
\end{align*}
This proves \eqref{e23} since $J_1(j_{0,k})=-J_0'(j_{0,k})$.

\smallskip

It remains to prove \eqref{e26} for $d=2$. Define the sequence
\begin{equation}\label{e72}
\left\{a_{k}\right\}_{k=1}^\infty:=\left\{-j_{0,k}^{2} J'_{0}\left(j_{0,k}\right)\right\},
\end{equation}
so that
\begin{equation*}
I_2=\sum_{k=1}^\infty\frac{2}{a_{k}}.
\end{equation*}
First observe that by the oscillatory nature of Bessel functions $\operatorname{sign} a_k = (-1)^{k+1}$. Next we show that
\begin{equation}\label{e74}
|a_k|<|a_{k+1}|,\, k\in \N,\,\, \lim_{k\rightarrow\infty}|a_{k}|= \infty.
\end{equation}
By \eqref{e66} and \eqref{e72}
\begin{equation*}
|a_k|=j_{0,k}^2|J'_0(j_{0,k})|=\frac{2j_{0,k}}{\pi M_0(j_{0,k})}.
\end{equation*}
Assertion \eqref{e74} follows since $k\rightarrow j_{0,k}$ is strictly increasing to $\infty$, and  $k\rightarrow M_0(j_{0,k})$ is strictly decreasing to $0$.

Since the sequence $\{a_k^{-1}\}_{k=1}^{\infty}$ has alternating signs, its absolute values are strictly decreasing to $0$. Moreover since $a_1>0$
we have that
\begin{equation*}
I_2>\sum_{k=1}^{2n} \frac{2}{a_{k}},\,n\in\N.
\end{equation*}
In particular
\begin{equation*}
I_2>\sum_{k=1}^4 \frac{2}{a_{k}}=2\sum_{k=1}^4 \frac{1}{j_{0,k}^2 J_1\left(j_{0,k}\right)}>0.50976,
\end{equation*}
which proves \eqref{e23}.
Finally, \eqref{e26} follows from \eqref{e23}--\eqref{e25}.

\hfill$\square$

\section*{Acknowledgements}
I am grateful to Michael Levitin for helpful comments, and for providing the non-trivial proof of the inequality in \eqref{e23} above.
I am grateful to Lorenzo Brasco, Rick Laugesen and the referees for their helpful comments and suggestions.

\section*{Statements and Declarations} The author declares that there is no conflict of interest. Data sharing is not applicable to this article as no datasets were generated or analyzed during the current study.

\section*{Funding Information}
The author would like to thank the Isaac Newton Institute for Mathematical Sciences, Cambridge, for support and hospitality during the programme Geometric Spectral Theory and applications, where work on this paper was undertaken. This work was supported by EPSRC grant EP/Z000580/1.



\begin{thebibliography} {99}

\bibitem{AS} Abramowitz, M., Stegun, I. A.: Pocketbook of Mathematical Functions. Material Selected by M. Danos, M., J. Rafelski. Verlag Harri Deutsch, Thun (1984)

\bibitem{B}  Beck, T., Brandolini, B., Burdzy, K., Henrot, A., Langford, J. L., Larson, S., Smits, R., Steinerberger, S.: Improved bounds for Hermite-Hadamard inequalities in higher dimensions.
J. Geometric Analysis \textbf{31} 801--816 (2021)

\bibitem{MvdBSri} van den Berg, M., Srisatkunarajah, S.: Heat flow and Brownian motion for a region in
${\mathbb R}^{2}$ with a polygonal boundary. Probability Theory and
Related Fields {\bf 86}, 41--52 (1990)

\bibitem{MvdBEB} van den Berg, M., Bolthausen, E.: Estimates for Dirichlet eigenfunctions. Journal of
the London Mathematical Society (2) {\bf 59}, 607--619 (1999)


\bibitem{vdBC} van den Berg, M., Carroll, T.: Hardy inequality and $L^p$
estimates for the torsion function. Bull. London Mathematical Society \textbf{41}, 980--986 (2009)
\bibitem{BR} Brasco, L., Ruffini, B.: Compact Sobolev embeddings and torsion functions. Ann. Institut H. Poincar\'e \textbf{34}, 817--843 (2017)

\bibitem{BMS} Brasco, L., Magnanini, R., Salani, P.: The location of the hot spot in a grounded  convex conductor. Indiana University Mathematics Journal \textbf{60}, 633--659 (2011)

\bibitem{BM} Brasco, L., Magnanini, R.: The heart of a convex body. In: Magnanini, R., Sakaguchi, S., Alvino, A.(eds) Geometric Propoerties for parabolic and elliptic PDE's. Springer INdAM Series, Springer, Milano \textbf{2} 49--66 (2013)

\bibitem{Bu} Burdzy, K., Ftouhi, I., Liu, X., Mariano, P.: Geometric properties of optimizers for the maximum gradient of the torsion function. arXiv:2512.09400 (2025)

\bibitem{CJ} Carslaw, H. S., Jaeger, J. C.: Conduction of heat in solids, Oxford Science Publications, Clarendon Press, Oxford (1992)

\bibitem{Ch} Chambers, Ll. G.: An upper bound for the first zero of Bessel functions.  Math. Comp. \textbf{38}, 589--591 (1982)

\bibitem{GR} Gradshteyn, I. S., Ryzhik, I. M.: Table of Integrals, Series and Products. Academic Press, San Diego (1994)

\bibitem{HS} Hoskins, J.G., Steinerberger, S.: Towards optimal gradient bounds for the torsion function in the plane. J. Geometric Analysis \textbf{31}, 7812--7841 (2021)

\bibitem{BK} Kawohl, B.: On the location of maxima of the gradient for solutions to quasi linear elliptic problems and a problem raised by Saint Venant. Journal of Elasticity \textbf{17}, 195--206 (1987)

\bibitem{SL} Larson, S.: A sharp multidimensional Hermite-Hadamard inequality.
Int. Math. Res. Not. IMRN, 1297--1312 (2022)

\bibitem{LY} Li, Q., Yao, R.: On location of maximum of gradient of torsion funcion. SIAM Journal on Mathematical Analysis \textbf{56}, 5372--5385 (2024)

\bibitem{LX} Li, Q., Xie, S., Yang, H., Yao, R.: On location of maxima gradient of torsion function over some nonsymmetric planar domains. arXiv:2406.04790v3 (2025)

\bibitem{P} Payne, L. E.: Bounds for solutions of a class of quasilinear elliptic boundary value problems in terms of the torsion function.
Proc. Royal Soc. Edinburgh \textbf{88A}, 251--265 (1981)

\bibitem{GP} P\'olya, G.: Liegt die Stelle der gr\"ossten Beanspruche an der Oberfl\"ache ? Z. Angew. Math. Mech. \textbf{10}, 353--360 (1930)

\bibitem{PS} Port, S. C., Stone, C. J.: Brownian motion and classical potential theory.
Academic Press, New York-London (1978)

\bibitem{Stb} Steinerberger, S.: The Hermite-Hadamard inequality in higher dimensions. J. Geometric Analysis \textbf{30}, 466--483 (2020)

\bibitem{HV} Vogt, H.: $L_{\infty}$-estimates for the torsion function and  $L_{\infty}$-growth of semigroups satisfying Gaussian bounds.
Potential Anal. \textbf{51}, 37--47 (2019)

\bibitem{RS} Sperb, R.: Maximum principles and their applications. Academic Press, New York (1981)

\bibitem{TG}Timoshenko, S.,  Goodier, J. N.: Theory of Elasticity. 2nd ed. McGraw-Hill,  New York (1951)

\bibitem{ECT} Titchmarsh, E. C.: The theory of functions. Oxford University Press, Oxford (1979)

\bibitem{Wat}
Watson, G. N.: A treatise on the theory of Bessel functions. 2nd ed.
Cambridge University Press, Cambridge (1995)


\bibitem{WH} Weinberger, H. F.: Remark on the preceding paper of Serrin. Arch. Rat. Mech. Anal. \textbf{43}, 319--320 (1971)

\end{thebibliography}
\end{document}